\title{Sections and Chapters}
\title{\bf  Homological propeties of Bimeasure algebras and their BSE properties}
\author{ Ali Rejali $^1$, \thanks{2020 Mathematics Subject Classifcation. Primary: 46J05; Secondary: 46J10} ,  \and Maryam Aghakoochaki  $^2$, \thanks{Corresponding author}}
\date{
	$^1$Department of Pure Mathematics, Faculty of Mathematics and Statistics, University of Isfahan, Isfahan 81746-73441, Iran \\ \texttt{rejali@sci.ui.ac.ir, Orcid: 0000-0001-7270-665X}\\%
	Isfahan University  \\ \texttt{mkoochaki@sci.ui.ac.ir, Orcid: 0000-0002-3851-6550}	$^2$\\[2ex]%
	\today
}
\newcommand{\tnorm}[1]{{\left\vert\kern-0.25ex\left\vert\kern-0.25ex\left\vert #1 
		\right\vert\kern-0.25ex\right\vert\kern-0.25ex\right\vert}}
\theoremstyle{plain}
\newtheorem{thm}{Theorem}[section]
\newtheorem{cor}[thm]{Corollary}
\newtheorem{lem}[thm]{Lemma}
\theoremstyle{definition}
\newtheorem{DEF}[thm]{\bf Definition}
\begin{document}
	\maketitle

\begin{abstract}
 Let $G$ and $H$ be locally compact  groups. $BM(G, H)$ denoted  the Banach algebra
of bounded bilinear forms on $C_{0}(G)\times C_{0}(H)$.In this paper, the homological properties of Bimeasure algebras are investigated. 
 It is found and approved that the  Bimeasure algebras $BM(G, H)$ is amenable if and only if $G$ and $H$ are discrete.
The correlation between the weak amenability of $BM(G, H)$ and $M(G\times H)$ is assessed. It is found and approved that the biprojectivity of the bimeasure algebra $BM(G, H)$ is equivalent to the finiteness of $G$ and $H$.
Furthermore, we show that the bimeasure group algebra $BM_{a}(G, H)$ is a BSE algebra.
 It will be concluded that $BM(G, H)$ is a BSE- algebra if and only if  $G$ and $H$ are discrete groups. 
 
\noindent\textbf{Keywords:} Amenability, Banach algebra, BSE algebra,   Bimeasures algebra, weak amenability
\end{abstract}

\section{Introduction}\label{sec1}

                  Let $X$ and $Y$ be locally compact spaces. Set $V_{0}(X, Y)= C_{0}(X) \widehat\otimes C_{0}(Y)$,  the projective tensor product of $C_{0}(X)$ and $C_{0}(Y)$. Traditionally, the members of dual space of  $V_{0}(X, Y)$ are considered to be of bimeasures on
$X\times Y$.
For example, see  \cite{NV}.  Define
$$
BM(X, Y):= V_{0}^{*}(X, Y)
$$
The researcher in \cite{NV}, denoted the space of all bimeasures on $X\times Y$ by $BM(X, Y)$. Let $G$ and $H$ be locally compact groups. Then
$$
(C_{0}(G)\check\otimes C_{0}(H))^{*} = C_{0}(G\times H)^{*} = M(G\times H)
$$
with convolution product is a Banach algebra; see \cite{HR}.
Throughout this paper, $L_{1}(G)$ is the group algebra with convolution product which
is a closed subalgebra of measure algebra $ M(G) = {C_{0}(G)}^{*}$,
 for locally compact group $G$, where $C_{0}(G)$ is the Banach space of all bounded continuous functions on $G$ 
which are zero at infinity. Let $A$ and $B$ be Banach algebras. Then we denote the algebraic tensor product
by $A\otimes B$.  We denote the
injective tensor product by $A\check \otimes B$ and the projective tensor product by $A\hat\otimes B$.

   Some authors have studied the bimeasure algebras $BM(G, H)$, where $G$ and $H$ are Abelian groups. The researchers in \cite{GB}, introduced a multiplication and an adjoint operation on $BM(G, H)$ 
which turns it into a Banach algebra. Afterward, in \cite{GI}, they extended it to non-Abelian groups. Many of its algebraic and topological properties have been studied. For example due to 
$$
L^{1}(G\times H)= (L^{1}(G)\widehat\otimes L^{1}(H))^{*}
$$
introduced the group bimeasure algebra $BM_{a}(G, H)$ and discussed its properties. They showed that  $BM_{a}(G, H)$  plays a role in 
$BM(G, H)$ similar to that played by $L^{1}(G \times H)$ in $M( G \times H)$.
  
   In this paper, we discuss some other algebraic and topological properties of bimeasure algebras. We have shown that  $BM_{a}(G, H)$ is a $\textup {BSE}$- algebra. Moreover, $BM(G, H)$ is a $\textup {BSE}$- algebra if and only if the groups $G$ and 
$H$ are discrete. Also, many homological and cohomological properties of bimeasure algebras $BM(G, H)$ and $BM_{a}(G, H)$were determined. We show that whenever bimeasure algebras are amenable, weak amenable,  contractive, biprojective, and biflat.

The  Bochner-Schoenberg-Eberlein (BSE) is derived from the famous theorem proved in 1980 by Bochner and Schoenberg for the group of real numbers; \cite{SB} and \cite{P1}. The researcher in \cite{N2},  revealed that if $G$ is any locally compact abelian group, then the group algebra $L_{1}(G)$ is a BSE algebra.
 The researcher in \cite{P1},\cite{E7},\cite{E8} assessed the commutative Banach algebras that meet the Bochner-Schoenberg-Eberlein- type theorem and explained their properties.
The authors in \cite{MAR} and \cite{MAR2},
investigated and assess the correlation between different types of BSE- Banach algebras
$A$, and the Banach algebras $C_{0}(X, A)$ and $L_{1}(G, A)$. Next, we in \cite{MAR3}, the
$\textup{BSE}$ and $\textup{BED}$- property of tensor Banach algebra where assesed. 


                           The basic terminologies and the related information on  $\textup{BSE}$-  algebras are extracted from \cite{E6}, \cite{E7}, and \cite{E8}.
Let $A$ be a  commutative semisimple Banach algebra, and $\Delta(A)$ be the character space of $ A$ with the Gelfand topology. In this study, $\Delta(A)$ represents the set of all non-zero multiplicative
linear functionals over $A$.
 Assume that $C_{b}(\Delta(A))$ is the space consisting of all complex-valued continuous and bounded functions on $\Delta(A)$.
A continuous linear operator $T$ on $A$ is named a multiplier if for all $x,y\in A$, $T(xy)=xT(y)$.
The set of all multipliers on $A$ will be expressed as $M(A)$. It is obvious that $M(A)$ is a Banach algebra, and if $A$ is an unital Banach algebra, then 
$M(A)\cong A$.  As observed in   \cite{klar},  for each $T\in M(A)$ there exists a unique bounded  continuous function $\widehat{T}$ on $\Delta(A)$ where the following is yield:
$$\varphi(Tx)=\widehat{T}(\varphi)\varphi(x),$$
for all $x\in A$ and $\varphi\in \Delta(A)$.
By setting $\{\widehat{T}: T\in M(A)\}$, the $\widehat{M(A)}$ is yield.

 If the Banach algebra $A$ is  semisimple, then the Gelfand map $\Gamma_{A}: A\to C_{0}(\Delta(A)) $ which $f\mapsto \hat f$, is injective. A bounded complex-valued continuous function $\sigma$ on $\Delta(A)$  is named a  BSE function, if there exists a positive real number $\beta$ in a sense that for every finite complex-number $c_{1},\cdots,c_{n}$,  and  the same many $\varphi_{1},\cdots,\varphi_{n}$ in $\Delta(A)$ the following inequality
$$\mid\sum_{j=1}^{n}c_{j}\sigma(\varphi_{j})\mid\leq \beta\|\sum_{j=1}^{n}c_{j}\varphi_{j}\|_{A^{*}}$$
holds.\\
The set of all BSE-functions is expressed by $C_{\textup{BSE}}(\Delta( A))$, where for 
 each $\sigma$, the BSE-norm of $\sigma$, $\|\sigma\|_{\textup{BSE}}$ 
is   the infimum of all  $\beta$s  applied in the above inequality.  The researcher in [\cite{E6}, Lemma1] proved that $(C_{\textup{BSE}}(\Delta(A)), \|.\|_{\textup{BSE}})$ is a semisimple Banach subalgebra of $C_{b}(\Delta(A))$. Algebra $A$ is named a BSE algebra  if it meets the following condition:
$$\widehat{M(A)}= C_{\textup{BSE}}(\Delta(A)).$$ 
If $A$ is unital, then $\widehat{M(A)} = \widehat{A}\mid_{\Delta(A)}$, indicating that $A$ is a  BSE algebra if and only if  $C_{\textup{BSE}}(\Delta(A)) = \widehat{A}\mid_{\Delta(A)}$.

     Throughout this paper, the group algebra $L^{1}(G)$ with convolution product is a Banach subalgebra of measure algebra $M(G)= {C_{0}(G)}^{*}$, for locally compat group $G$. Such that $C_{0}(G)$ is the space of bounded functions on $G$
continuous with a limit of zero at infinity. Let $A$ and $B$ be Banach algebras. Then we denote the algebraic tensor product
by $A\otimes B$.  We denote the
injective tensor product by $A\check \otimes B$ and the projective tensor product by $A\hat\otimes B$.

\section{Fundamental Theorem of Grothendieck}

                 Let $X$ and $Y$ be locally compact Hausdorff spaces and $T\in (C_{0}(X)\hat\otimes C_{0}(Y))^{*}$. There exist a regular Borel probability
measures $\mu$ on $X$ and $\nu$ on $Y$, the sequences $(h_{n})\in L^{2}(\mu)$ and $(k_{n})\in  L^{2}(\nu)$, such that 
$$
T(f\otimes g)= \sum_{n=1}^{\infty} <f, h_{n}>_{H}<g,k_{n}>_{K}
$$  
for all $f\in H$ and $g\in K$, where $H= L^{2}(\mu)$ and $K= L^{2}(\nu)$. On the other hand 
\begin{align*}
T(f\otimes g) &= \sum_{n=1}^{\infty} \int_{X} f(x)\overline{h_{n}(x)}d\mu(x)\int_{Y}g(y)\overline{k_{n}(y)}d\nu(y)\\
                      &= \int_{X}\int_{Y} f(x)g(y) \sum_{n=1}^{\infty}\overline{h_{n}(x)k_{n}(y)}d\mu(x)d\nu(y)\\
                      &= \int_{X\times Y} f\otimes g(x,y).\overline{w(x,y)}d\mu\otimes \nu(x,y) 
\end{align*}
for all $f\in H$ and $g\in K$, where 
$$w(x,y)= \sum_{n=1}^{\infty}\overline{h_{n}(x)k_{n}(y)}$$
 Then 
$$
T(h)= \int_{X\times Y} hwd\mu\otimes \nu
$$
for all $h\in C_{0}(X)\hat\otimes C_{0}(Y)$.

     Moreover, if $T\in (C_{0}(X)\check \otimes C_{0}(Y))^{*}$, then there exists $\eta\in M_{b}(X\times Y)$ such that 
$$
T(h)= \int_{X\times Y} hd\eta
$$
for all $h\in C_{0}(X)\check \otimes C_{0}(Y)= C_{0}(X\times Y)$.
\begin{lem}
(i)
Always,
$\|h\|_{\epsilon}\leq \|h\|_{\pi}$ for each $h\in C_{0}(X)\hat\otimes C_{0}(Y)$. Therefore $ C_{0}(X)\hat\otimes C_{0}(Y)\subseteq C_{0}(X)\check\otimes C_{0}(Y)$.\\
(ii)
$$(C_{0}(X)\check\otimes C_{0}(Y))^{*}\subseteq (C_{0}(X)\hat\otimes C_{0}(Y))^{*}.$$
(iii)
$$\overline{C_{0}(X)\hat\otimes C_{0}(Y)}^{\|.\|_{\epsilon}}= C_{0}(X)\check\otimes C_{0}(Y).$$
\end{lem}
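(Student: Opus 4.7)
The plan is to prove the three parts in order, using standard properties of the projective and injective tensor norms and a duality argument, with the main work concentrated in part~(i).

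For part~(i), I would start from the definitions: given $h\in C_{0}(X)\otimes C_{0}(Y)$ and any representation $h=\sum_{i=1}^{n}f_{i}\otimes g_{i}$, I would first observe that under the canonical identification $C_{0}(X)\check\otimes C_{0}(Y)=C_{0}(X\times Y)$ (this is stated in the preceding paragraph), the injective norm of $h$ is $\sup_{(x,y)}\bigl|\sum f_{i}(x)g_{i}(y)\bigr|$. Using the pointwise estimate $|\sum f_{i}(x)g_{i}(y)|\le \sum \|f_{i}\|_{\infty}\|g_{i}\|_{\infty}$ and taking the infimum over all representations yields $\|h\|_{\epsilon}\le \|h\|_{\pi}$. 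The norm inequality then shows that the identity on the algebraic tensor product extends to a bounded map $\iota\colon C_{0}(X)\hat\otimes C_{0}(Y)\to C_{0}(X)\check\otimes C_{0}(Y)$. To conclude that the first space really sits inside the second, I need $\iota$ to be injective; this is where care is required, and I would invoke the fact that the commutative $C^{*}$-algebra $C_{0}(X)$ has the (metric) approximation property, which forces the canonical map between the two completions to be one-to-one.

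For part~(ii), this is a clean duality argument. From (i), the identity on $C_{0}(X)\otimes C_{0}(Y)$ is continuous as a map from $(\cdot,\|\cdot\|_{\pi})$ to $(\cdot,\|\cdot\|_{\epsilon})$, so it extends to the bounded inclusion $\iota$ above. Since duals of a normed space and its completion coincide canonically, taking Banach-space adjoints of $\iota$ gives a bounded map $\iota^{*}\colon (C_{0}(X)\check\otimes C_{0}(Y))^{*}\to (C_{0}(X)\hat\otimes C_{0}(Y))^{*}$, which is nothing but restriction of functionals. Injectivity of $\iota^{*}$ follows from the density of the algebraic tensor product in the injective tensor product: a functional on $C_{0}(X)\check\otimes C_{0}(Y)$ that vanishes after restriction must vanish on a dense subspace, hence identically.

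For part~(iii), I would argue that $C_{0}(X)\otimes C_{0}(Y)\subseteq \iota\bigl(C_{0}(X)\hat\otimes C_{0}(Y)\bigr)\subseteq C_{0}(X)\check\otimes C_{0}(Y)$, and take closures in the $\|\cdot\|_{\epsilon}$-norm. By the very definition of the injective tensor product, the left-hand side has $\|\cdot\|_{\epsilon}$-closure equal to $C_{0}(X)\check\otimes C_{0}(Y)$, so the $\|\cdot\|_{\epsilon}$-closure of $C_{0}(X)\hat\otimes C_{0}(Y)$ is squeezed to be exactly $C_{0}(X)\check\otimes C_{0}(Y)$, as claimed.

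The main obstacle is the injectivity issue inside part~(i): without invoking the approximation property (or equivalently the nuclearity of commutative $C^{*}$-algebras), one only obtains a continuous linear map between the two tensor completions, which is not yet a literal inclusion. The remaining parts (ii) and (iii) are then essentially formal consequences of the norm comparison together with the standard density of the algebraic tensor product in each completion.
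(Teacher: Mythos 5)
Your proposal is correct, and for parts (ii) and (iii) it follows essentially the same route as the paper: (ii) is the observation that a functional bounded for $\|\cdot\|_{\epsilon}$ is bounded for $\|\cdot\|_{\pi}$ (the paper phrases this as $|T(h)|\le m\|h\|_{\epsilon}\le m\|h\|_{\pi}$, you phrase it as taking the adjoint $\iota^{*}$ of the canonical contraction, which is the same restriction map), and (iii) is the identical squeeze of $\|\cdot\|_{\epsilon}$-closures between $C_{0}(X)\otimes C_{0}(Y)$ and $C_{0}(X)\check\otimes C_{0}(Y)$. Where you genuinely go beyond the paper is part (i): the paper states the inclusion $C_{0}(X)\hat\otimes C_{0}(Y)\subseteq C_{0}(X)\check\otimes C_{0}(Y)$ without proof, and in particular never addresses the point you single out, namely that the norm inequality only yields a contractive canonical map $\iota$ between the two completions, not a priori an injection. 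Your appeal to the (metric) approximation property of the commutative $C^{*}$-algebra $C_{0}(X)$ to force injectivity of $\iota$ is exactly the right fix and is the standard way to justify treating $C_{0}(X)\hat\otimes C_{0}(Y)$ as a literal subspace of $C_{0}(X\times Y)$; this point matters later in the paper (e.g.\ when functionals on the injective tensor product are restricted to the projective one), so making it explicit is a real improvement rather than a detour. One tiny remark: the inequality $\|h\|_{\epsilon}\le\|h\|_{\pi}$ holds for arbitrary Banach spaces directly from the definition of the injective norm as a supremum over functionals in the dual unit balls, so you do not even need the identification of $\|\cdot\|_{\epsilon}$ with the sup norm on $X\times Y$ for that step, though it is of course valid here.
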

\begin{proof}
(ii)
Let $ T \in (C_{0}(X)\check\otimes C_{0}(Y))^{*}$, so there exists $m>0$ such that 
$$
\mid T(h)\mid\leq m\|h\|_{\epsilon}\leq m\|h\|_{\pi}
$$
Then $T\in (C_{0}(X)\hat\otimes C_{0}(Y))^{*}$.\\
(iii) We have: 
$$ C_{0}(X)\otimes C_{0}(Y)\subseteq C_{0}(X)\hat\otimes C_{0}(Y)\subseteq C_{0}(X)\check\otimes C_{0}(Y).$$
 Then the following is the yield:
\begin{align*}
\overline{C_{0}(X)\otimes C_{0}(Y)}^{\|.\|_{\epsilon}} &= C_{0}(X)\check\otimes C_{0}(Y)\\
                                                                                           &\subseteq \overline{C_{0}(X)\hat\otimes C_{0}(Y)}^{\|.\|_{\epsilon}}\\
                                                                                           &\subseteq \overline{C_{0}(X)\check\otimes C_{0}(Y)}^{\|.\|_{\epsilon}} \\
                                                                                              &\subseteq C_{0}(X)\check\otimes C_{0}(Y)
\end{align*}
Therefore, 
$$\overline{C_{0}(X)\hat\otimes C_{0}(Y)}^{\|.\|_{\epsilon}}= C_{0}(X)\check\otimes C_{0}(Y).$$
\end{proof}
\begin{DEF}
Let $X$ and $Y$ be locally compact Hausdorff spaces. The bimeasure algebra, $BM(X,Y)$ is the Banach space consisting of all measure such $\eta= w(\mu\otimes \nu)$ where
$\mu$ and $\nu$ are regular Borel probability
measures  on $X$ and on $Y$, respectively,  and $w\in L^{2}(\mu\otimes \nu)$.
\end{DEF}
\begin{thm}
Let $X$ and $Y$ be locally compact Hausdorff spaces. Then
$$
BM(X,Y)= (C_{0}(X)\hat\otimes C_{0}(Y))^{*}
$$ 
\end{thm}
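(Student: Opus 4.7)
The statement is essentially a restatement of the Grothendieck representation result outlined in the preamble of the section, so the plan is to show the two set-theoretic inclusions separately, using that material as the main engine.

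For the inclusion $(C_0(X)\hat\otimes C_0(Y))^{*}\subseteq BM(X,Y)$, I would simply invoke the Grothendieck representation stated at the top of Section~2. Given $T\in (C_0(X)\hat\otimes C_0(Y))^{*}$, that discussion produces regular Borel probability measures $\mu$ on $X$ and $\nu$ on $Y$, sequences $(h_n)\subset L^{2}(\mu)$ and $(k_n)\subset L^{2}(\nu)$, and a function $w(x,y)=\sum_{n}\overline{h_n(x)k_n(y)}$ in $L^{2}(\mu\otimes\nu)$ with
\[
T(h)=\int_{X\times Y} h\,w\,d(\mu\otimes\nu)\qquad (h\in C_0(X)\hat\otimes C_0(Y)).
\]
Setting $\eta=w(\mu\otimes\nu)$, this exhibits $T$ as integration against an element of $BM(X,Y)$ in the sense of the preceding definition, proving one containment.

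For the reverse inclusion $BM(X,Y)\subseteq (C_0(X)\hat\otimes C_0(Y))^{*}$, I would take $\eta=w(\mu\otimes\nu)\in BM(X,Y)$ and define $T_\eta$ on the algebraic tensor product $C_0(X)\otimes C_0(Y)$ by $T_\eta(h)=\int_{X\times Y} hw\,d(\mu\otimes\nu)$. The key estimate is a Cauchy--Schwarz computation: since $\mu\otimes\nu$ is a probability measure,
\[
|T_\eta(h)|\leq \|w\|_{L^{2}(\mu\otimes\nu)}\,\|h\|_{L^{2}(\mu\otimes\nu)}\leq \|w\|_{L^{2}(\mu\otimes\nu)}\,\|h\|_{\infty}\leq \|w\|_{L^{2}(\mu\otimes\nu)}\,\|h\|_{\pi},
\]
where the last inequality uses the standard bound $\|h\|_{\infty}\leq \|h\|_{\pi}$ on the projective tensor product. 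Thus $T_\eta$ is bounded on $(C_0(X)\otimes C_0(Y),\|\cdot\|_{\pi})$ and extends by density to a functional on $C_0(X)\hat\otimes C_0(Y)$.

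The two constructions $T\mapsto \eta$ and $\eta\mapsto T_\eta$ are inverses of one another on the underlying spaces, which establishes the equality of sets claimed by the theorem. The only subtlety worth attention is that the representing data $(\mu,\nu,w)$ for a given $T$ are not unique, so $BM(X,Y)$ should be viewed as the quotient space of admissible triples modulo the equivalence ``induces the same functional''; this is implicit in the definition since the space $BM(X,Y)$ is defined as a space of measures $\eta$, not of triples. With that understood, no further obstacle arises, and I do not expect any genuinely hard step beyond citing Grothendieck's theorem as already summarized in the section.
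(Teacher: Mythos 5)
Your proposal follows essentially the same route as the paper: the forward inclusion is the Grothendieck $L^{2}$-representation from the section preamble, and the reverse inclusion is the Cauchy--Schwarz estimate $|T_{\eta}(h)|\le\|w\|_{2}\|h\|_{\infty}\le\|w\|_{2}\|h\|_{\pi}$ followed by extension by density, which is exactly the content of the paper's map $G$ and its inverse. Your explicit observation that the representing triple $(\mu,\nu,w)$ is not unique and that only the induced measure $\eta$ matters is in fact a more careful treatment of well-definedness than the paper's assertion of uniqueness, but the underlying argument is the same.
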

\begin{proof}
Put 
\begin{align*}
G:~  (C_{0}(X)\hat\otimes C_{0}(Y))^{*} &\rightarrow BM(X,Y)\\
                                                                     & T\mapsto \eta_{T}
\end{align*}
where $\eta_{T}= w_{T}(\mu_{T}\otimes \nu_{T})$ such that $\mu_{T}$ and $\nu_{T}$ are regular Borel probability
measures  on $X$ and $Y$ and $w_{T}\in L^{2}(\mu_{T}\otimes \nu_{T})$, such that
$$
T(f\otimes g)= \int_{X}\int_{Y} w_{T}(x,y)f(x)g(y)d\mu_{T}(x)d\nu_{T}(y) 
$$
for all $f\in C_{0}(X)$ and $g\in C_{0}(Y)$. As observed in [\cite{GB}, Corrolary1.3],  there exists unique measures $\lambda_{X}, \lambda_{Y}$ such that 
$$
\mid T(f\otimes g)\mid\leq K_{G}^{2}\|f\|_{2}\|g\|_{2}
$$ 
for all $f\in C_{0}(X)$ and $g\in C_{0}(Y)$.  Therefore there exist unique measures $\mu_{T}$ and $\nu_{T}$ and $w_{T}$ corresponding to $ T$ where $w_{T}$ is in the Hilbert space $H\otimes K$. This implies that the map $T \mapsto \eta_{T}$ is
injective and well-defined. If $\mu\in P(X)$, $\nu\in P(Y)$ and $w\in L^{2}(\mu\otimes \nu)$, then 
$$
T(f, g)= \int_{X}\int_{Y} w(x, y)f(x)g(y)d\mu(x)d\nu(y)
$$
for $f\in C_{0}(X)$ and $g\in C_{0}(Y)$.  Thus $T$ is bilinear and 
$$
\mid T(f, g)\mid\leq \|f\|_{\infty}\|g\|_{\infty}\|w\|_{2, \mu\otimes\nu}.\|\mu\|_{2}\|\nu\|_{2}
$$
We have
\begin{align*}
\mid<w, 1>\mid &= \mid\int_{X\times Y}w^{2}(x, y)d\mu\otimes\nu(x,y)\mid^{\frac{1}{2}}.\mid\int 1(x,y)d\mu\otimes\nu(x,y)\mid^{\frac{1}{2}}\\
                          &=  \mid\int_{X}\int_{Y}w^{2}(x, y)d\mu(x)d\nu(y)\mid^{\frac{1}{2}}\|\mu\|.\|\nu\|
\end{align*}
As a result
$$
\mid T(f, g)\mid\leq \|f\|_{\infty}\|g\|_{\infty}\|w\|_{2}
$$
Then $T$ is a continuous bilinear function. So it has a continuous extension $\overline{T}\in (C_{0}(X)\hat\otimes C_{0}(Y))^{*}$, such that $G(\overline {T})= \eta$. Because
$$
G(\overline {T})(f\otimes g)= \int_{X}\int_{Y} w_{\overline{T}}(x,y)f(x)g(y)d\mu_{\overline{T}}(x)d\nu_{\overline{T}}(y)
$$
and 
$$
{{T}}(f\otimes g)= \int_{X}\int_{Y} w(x, y)f(x)g(y)d\mu(x)d\nu(y)
$$
for $f\in C_{0}(X)$ and $g\in C_{0}(Y)$. Thus 
$$
\mid G(\overline {T})(f\otimes g)\mid\leq \|w\|_{2}.\|f\|_{2}\|g\|_{2}
$$
and 
$$
\mid {{T}}(f\otimes g)\mid\leq K_{G}^{2}.\|f\|_{2}\|g\|_{2}
$$
Therefore $\eta_{T}= \eta$ where $\eta= w (\mu\otimes\nu)$ and $\eta_{T}= w_{T}( \mu_{T}\otimes\nu_{T})$. Hence $G$ is continuous, because
$$
\mid G( {T})(f\otimes g)\mid\leq \|w_{T}\|_{2}.\|f\|_{2}\|g\|_{2}
$$
This implies that
$$
\|G(T)\|\leq \|w_{T}\|_{2}
$$
and 
$$
{\overline T}(L)= <L, w_{T}>, \quad \|\overline T\|= \|w_{T}\|
$$
for $L\in H\hat\otimes K = (H\otimes K)^{*}$. Therefore $G$ is an isomorphism and 
$$
\|G(T)\|= \|T\|= \|w_{T}\|.
$$
See [\cite{wr}, Chapter 4].
\end{proof}

	\section{The BSE property for $BM_{a}(G, H)$ }	
Let $BM_{a}(G, H)$ be the closure of  $L^{1}(G\times H)$ in $BM(G, H)$ . The   researcher in \cite{GB}, revealed that $BM_{a}(G, H)$ plays a
role in $BM(G, H)$ similar to that played by $L^{1}(G\times H)$ in $M(G\times H)$. Assume that $f\in L^{1}(G\times H)$, so $u_{f}$ is a bimeasure determined by following
$$
\mu_{f}(E)=\int_{G}\int_{H}f(x,y)\boldsymbol{\chi}_{E}(x,y)dxdy
$$
The  related information on $BM_{a}(G, H)$ are extracted from \cite{GB} as the following:\\
$BM_{a}(G, H)$ is a closed subalgebra and ideal  of  $BM(G, H)$ where Gelfand transform is the Fourier transform  and $\Delta(BM_{a}(G, H))= \hat G\times\hat H$.
It is approved that ${\cal M}(BM_{a}(G, H))= BM(G, H)$.
By using the definiton of
BSE-functions, the folowing result is immediate.
\begin{lem}\label{r2}
Let $A$ and $B$ be Banach algebras. If $\Delta(A)= \Delta(B)$ and $\|.\|_{A^{*}}\cong \|.\|_{B^{*}}$,  then $C_{BSE}(\Delta(A))= C_{BSE}(\Delta(B))$.
\end{lem}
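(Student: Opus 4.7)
The plan is to read the equality directly off the definition of a BSE-function, since the BSE-norm depends only on (i) the set $\Delta(A)$ together with its Gelfand topology, which determines the ambient algebra $C_{b}(\Delta(A))$ in which BSE-functions live, and (ii) the restriction of the dual norm of $A^{*}$ to the complex linear span of $\Delta(A)$ inside $A^{*}$. The two hypotheses of the lemma supply exactly these data in common for $A$ and $B$.

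First, from $\Delta(A)=\Delta(B)$ with the Gelfand topology I obtain $C_{b}(\Delta(A))=C_{b}(\Delta(B))$, so both candidate BSE-spaces sit inside the same ambient algebra of bounded continuous functions. The hypothesis $\|\cdot\|_{A^{*}}\cong\|\cdot\|_{B^{*}}$ I read on the common linear span of characters: there exist constants $c_{1},c_{2}>0$ with
$$c_{1}\Bigl\|\sum_{j=1}^{n}c_{j}\varphi_{j}\Bigr\|_{A^{*}}\le\Bigl\|\sum_{j=1}^{n}c_{j}\varphi_{j}\Bigr\|_{B^{*}}\le c_{2}\Bigl\|\sum_{j=1}^{n}c_{j}\varphi_{j}\Bigr\|_{A^{*}}$$
for every finite selection of scalars $c_{j}$ and characters $\varphi_{j}\in\Delta(A)=\Delta(B)$.

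Given any $\sigma\in C_{\textup{BSE}}(\Delta(A))$ with BSE-constant $\beta$, combining the defining inequality with the right-hand comparison yields
$$\Bigl|\sum_{j=1}^{n}c_{j}\sigma(\varphi_{j})\Bigr|\le\beta\Bigl\|\sum_{j=1}^{n}c_{j}\varphi_{j}\Bigr\|_{A^{*}}\le\frac{\beta}{c_{1}}\Bigl\|\sum_{j=1}^{n}c_{j}\varphi_{j}\Bigr\|_{B^{*}},$$
which places $\sigma$ in $C_{\textup{BSE}}(\Delta(B))$ with BSE-norm at most $\beta/c_{1}$. The reverse inclusion is obtained symmetrically via $c_{2}$, so both the set equality and the equivalence of the two BSE-norms follow at once.

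I do not anticipate any real obstacle: the lemma is a direct unpacking of the definition of a BSE-function. The only point that requires a moment's care is the interpretation of $\|\cdot\|_{A^{*}}\cong\|\cdot\|_{B^{*}}$ as equivalence on the linear span of the common character space rather than on the full dual spaces (which in general admit no natural identification); once this is accepted, the two inequalities above do all the work.
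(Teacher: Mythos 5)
Your proof is correct and is precisely the argument the paper has in mind: the paper offers no written proof, merely asserting the lemma is ``immediate'' from the definition of BSE-functions, and your unpacking — equality of the ambient spaces $C_{b}(\Delta(A))=C_{b}(\Delta(B))$ plus two-sided norm comparison on the linear span of the common characters — is exactly that immediate argument made explicit. The only cosmetic slip is that the displayed estimate with $\beta/c_{1}$ uses the left-hand (lower) comparison $c_{1}\|\cdot\|_{A^{*}}\le\|\cdot\|_{B^{*}}$, not the ``right-hand'' one as you say, but the inequality itself is correct.
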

\begin{cor}\label{cbse}
Let $G$ and $H$ be LCA groups. Assume that $G$ or $H$ is a discrete group, then 
$$
C_{BSE}(\Delta(BM(G, H))=C_{BSE}(\Delta(M(G\times H))
$$
\end{cor}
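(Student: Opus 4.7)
The plan is to invoke Lemma \ref{r2} with $A = BM(G,H)$ and $B = M(G\times H)$. This reduces the corollary to showing (i) $\Delta(BM(G,H)) = \Delta(M(G\times H))$ and (ii) the dual norms are equivalent on this common character space.

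My approach is to exhibit, under the discreteness hypothesis on $G$ or $H$, a Banach algebra isomorphism $BM(G,H) \cong M(G\times H)$ with equivalent norms; both (i) and (ii) then follow immediately. The identification at the Banach space level is already provided by the Grothendieck representation of Section 2, which realizes $BM(G,H)$ as the space of bimeasures $\eta = w(\mu\otimes \nu)$, naturally embedded in $M(G\times H)$, with the map $T\mapsto \eta_T$ a Banach space embedding whose norms are comparable up to the Grothendieck constant $K_G^2$. When $G$ is discrete, I would argue that this correspondence is surjective onto all of $M(G\times H)\cong \ell^1(G, M(H))$: any $\eta = \sum_g \delta_g\otimes\mu_g$ admits a decomposition $w(\mu\otimes\nu)$ by setting $\mu = \sum_g (\|\mu_g\|/\|\eta\|)\delta_g$ and choosing a probability measure $\nu$ on $H$ that dominates every $|\mu_g|$, so that $w(g,\cdot)$ is the suitably rescaled Radon-Nikod\'ym derivative $d\mu_g/d\nu$.

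The remaining task is to match the algebra structures. I would argue that the bimeasure product on $BM(G,H)$ from \cite{GI} and the convolution product on $M(G\times H)$ agree under the identification when $G$ is discrete: the bimeasure product acts as tensor-product convolution in the two variables separately, and the discreteness of $G$ renders the $G$-convolution atomic, which matches the ordinary measure convolution on the product group $G\times H$. Given the algebra isomorphism, Lemma \ref{r2} then yields the claimed equality $C_{BSE}(\Delta(BM(G,H))) = C_{BSE}(\Delta(M(G\times H)))$.

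The principal obstacle is the reconciliation of the two algebra structures. The Banach space identification is essentially bookkeeping from Section 2 combined with the surjectivity argument above, but promoting it to a Banach algebra isomorphism rests crucially on the discreteness of one factor, since only then does the bilinear convolution on $BM(G,H)$ collapse pointwise onto the convolution on $M(G\times H)$; in the fully non-discrete LCA case the two products genuinely differ and this route fails, which is precisely why the discreteness hypothesis is needed.
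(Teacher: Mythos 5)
Your overall strategy agrees with the paper's in one respect: both proofs reduce the corollary to Lemma \ref{r2} applied to $A=M(G\times H)$ and $B=BM(G,H)$. But the way you propose to verify the hypotheses of that lemma contains a genuine gap. You claim a Banach algebra isomorphism $BM(G,H)\cong M(G\times H)$ with equivalent norms when one factor is discrete. This is false in general: for $G$ infinite discrete and $H$ infinite one has $M(G\times H)\subsetneq BM(G,H)$ (equality of the two algebras forces finiteness of one factor, as the paper's own contractibility and biprojectivity theorems implicitly use). What is true, and what the paper invokes, is the Graham--Schreiber result [GB, Theorem 5.12] that $M(G\times H)$ is merely \emph{dense} in $BM(G,H)$ precisely when $G$ or $H$ is discrete; the coincidence of the character spaces is then quoted separately from [GB, Theorem 6.2]. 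Your argument only ever establishes the inclusion $M(G\times H)\hookrightarrow BM(G,H)$ (every measure on the product is a bimeasure), which holds for arbitrary locally compact $G$ and $H$ and makes no use of discreteness; the reverse containment, which your isomorphism would require, is exactly the direction that fails, and you never address it. So the discreteness hypothesis is doing no work in your proof, which is a sign the argument cannot be right.

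A secondary issue: even the inclusion you do argue is shakier than you present it. Writing $\eta=\sum_g\delta_g\otimes\mu_g$ with $\mu=\sum_g(\|\mu_g\|/\|\eta\|)\delta_g$ and $\nu$ a probability measure dominating the $|\mu_g|$, you must check that the resulting density $w$ lies in $L^2(\mu\otimes\nu)$, not just $L^1$; with the natural choice $\nu=\sum_n 2^{-n}|\mu_{g_n}|/\|\mu_{g_n}\|$ the derivatives $d\mu_{g_n}/d\nu$ can grow like $2^n$, and the weighted sum $\sum_n\mu(\{g_n\})\|w(g_n,\cdot)\|_2^2$ need not converge. To repair the proof, replace the isomorphism claim by the density statement: from $\overline{M(G\times H)}^{\,\|\cdot\|_{BM}}=BM(G,H)$ one identifies the dual spaces and compares the dual norms on linear combinations of characters, and from [GB, Theorem 6.2] one gets $\Delta(BM(G,H))=\Delta(M(G\times H))=\hat G\times\hat H$; Lemma \ref{r2} then applies as you intended.
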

\begin{proof}
Put $A:= M(G\times H)$ and $B:= BM(G, H)$. According to [\cite{GB}, Theorem5.12] $\overline{A}= B$, so $A^{*}= B^{*}$ and due to[\cite{GB}, Theorem 6.2]  $\Delta(A)= \Delta(B)$. Therefore by
applying  Lemma \ref{r2}, $C_{BSE}(\Delta(BM(G,A))=C_{BSE}(\Delta(M(G\times H))$.
\end{proof}
 \begin{thm}
Let $G$ and $H$ be the LCA groups. Then $BM_{a}(G,H)$ is a $\textup{BSE}$- algebra.
\end{thm}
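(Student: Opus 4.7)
The statement unfolds into the identity $\widehat{\mathcal M(BM_a(G,H))} = C_{\textup{BSE}}(\Delta(BM_a(G,H)))$. Invoking the structural input from \cite{GB}, namely $\Delta(BM_a(G,H)) = \hat G\times\hat H$, $\mathcal M(BM_a(G,H)) = BM(G,H)$, and the fact that the Gelfand map on $BM_a(G,H)$ coincides with the Fourier transform, the task reduces to verifying
\[
\widehat{BM(G,H)}\big|_{\hat G\times\hat H} \;=\; C_{\textup{BSE}}(\hat G\times\hat H),
\]
where the BSE space is computed against the $BM_a(G,H)$-dual norm.

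For the inclusion $\widehat{BM(G,H)} \subseteq C_{\textup{BSE}}$, which is the routine direction, if $T \in BM(G,H)$ acts as a multiplier on $BM_a(G,H)$ then $\varphi(Tx) = \widehat{T}(\varphi)\,\varphi(x)$ for every $\varphi \in \hat G\times\hat H$ and every $x \in BM_a(G,H)$. Evaluating on a bounded approximate identity $(e_\alpha) \subset L^1(G\times H) \subset BM_a(G,H)$ and passing to a limit yields $\bigl|\sum_j c_j\widehat{T}(\varphi_j)\bigr| \le \|T\| \cdot \bigl\|\sum_j c_j \varphi_j\bigr\|_{BM_a(G,H)^*}$, which is the BSE inequality with $\beta = \|T\|$.

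For the reverse inclusion, my plan is to apply the classical Bochner--Schoenberg--Eberlein theorem on the LCA group $G\times H$ \cite{N2}, which delivers $C_{\textup{BSE}}(\Delta(L^1(G\times H))) = \widehat{M(G\times H)}$. The continuous dense embedding $L^1(G\times H) \hookrightarrow BM_a(G,H)$ satisfies $\|f\|_{BM} \le \|f\|_1$, so the dual norm on the linear span of characters obeys $\|\cdot\|_{L^1(G\times H)^*} \le \|\cdot\|_{BM_a(G,H)^*}$; consequently $\widehat{M(G\times H)} \subseteq C_{\textup{BSE}}(\Delta(BM_a(G,H)))$, and via $M(G\times H) \hookrightarrow BM(G,H)$ every classical BSE-function is realised as the Gelfand transform of a multiplier. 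For a general $\sigma \in C_{\textup{BSE}}(\Delta(BM_a(G,H)))$ with BSE constant $\beta$, one extends $\sigma$ via Hahn--Banach to $L \in BM_a(G,H)^{**}$ with $\|L\| \le \beta$ and, exploiting the Arens product together with the bounded approximate identity, identifies $L$ with an element $T \in \mathcal M(BM_a(G,H)) = BM(G,H)$ whose Gelfand transform is $\sigma$.

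The principal obstacle is this last identification. The $L^1(G\times H)$- and $BM_a(G,H)$-dual norms are not equivalent on the character span in general, so the $BM_a$-BSE space is a priori strictly larger than its $L^1$-counterpart, and the classical theorem does not transfer automatically. Closing this gap will hinge on the Grothendieck representation of $BM(G,H) = V_0^*(G,H)$ by probability measures with $L^2$-densities developed in Section~2, which supplies enough elements in $\mathcal M(BM_a(G,H))$ to realise the full class of BSE functions with respect to the $BM_a(G,H)$-dual norm.
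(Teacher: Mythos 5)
Your first inclusion $\widehat{\mathcal M(BM_a(G,H))}\subseteq C_{\textup{BSE}}(\Delta(BM_a(G,H)))$ is fine and is exactly what the paper does (bounded approximate identity plus [\cite{E6}, Corollary 5]). The gap is in the reverse inclusion, where your argument stops precisely where the real work begins. The Hahn--Banach/Arens-product step produces, from a BSE function $\sigma$ with constant $\beta$, only an element $L\in BM_a(G,H)^{**}$, equivalently a net in $BM_a(G,H)$ bounded by $\beta$ whose Gelfand transforms converge pointwise to $\sigma$; there is no general principle upgrading such an $L$ to a multiplier with transform $\sigma$. Every commutative semisimple Banach algebra with a bounded approximate identity admits this extension, yet not every such algebra is BSE --- $M(G)$ for non-discrete LCA $G$ is unital and fails the BSE property, a fact this very paper exploits in its Theorem 4.1 --- so the ``identification'' you defer to the Grothendieck representation is the whole theorem, not a technical afterthought. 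Your intermediate observation that $\widehat{M(G\times H)}\subseteq C_{\textup{BSE}}(\Delta(BM_a(G,H)))$ via $\|\cdot\|_{L^1(G\times H)^*}\le\|\cdot\|_{BM_a(G,H)^*}$ is true but points the wrong way: it enlarges the BSE space rather than trapping it inside $\widehat{BM(G,H)}$.

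What the paper actually does to close this is a Bohr-compactification argument that your proposal does not contain. Given $\sigma\in C_{\textup{BSE}}(\Delta(BM_a(G,H)))$, each pair $(\chi_1,\chi_2)\in\hat G\times\hat H$ is viewed as the continuous function $f_{\chi_1,\chi_2}(\gamma_1,\gamma_2)=\gamma_1(\chi_1)\gamma_2(\chi_2)$ on $bG\times bH$; the BSE inequality shows that $\sum_i c_i f_{\chi_i^1,\chi_i^2}\mapsto\sum_i c_i\sigma(\chi_i^1\otimes\chi_i^2)$ is a bounded functional on the span of these functions in $C(bG\times bH)$, hence extends to a measure $\mu\in M(bG\times bH)$. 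Writing $\mu=\mu^+-\mu^-$, the two induced functions on $\hat G\times\hat H$ are positive definite, so Bochner's theorem realises each as the Fourier--Stieltjes transform of a measure on $G\times H$, and $\nu=\mu_1-\mu_2\in M(G\times H)\subseteq BM(G,H)=\mathcal M(BM_a(G,H))$ satisfies $\hat\nu=\sigma$. (Note the conclusion is in fact the stronger containment $C_{\textup{BSE}}(\Delta(BM_a(G,H)))\subseteq\widehat{M(G\times H)}$.) As written, your proposal is an honest reduction to the hard step rather than a proof of it.
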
          
\begin{proof}
Because $BM_{a}(G, H)$ has an approximate identity  by applying [\cite{E6}, Corollary5]  the following is yield:
$${{{\cal M}}(BM_{a}(G, H)\widehat)}\subseteq C_{\textup{BSE}}(\Delta(BM_{a}(G, H))).$$
On the other hand, according to [\cite{GI}, Theorem4.5], ${{{\cal M}}(BM_{a}(G, H)\widehat)}= BM(G, H)$ and by [\cite{GB}, Remark 4.2], $\Delta(BM_{a}(G, H)) = \hat G\times\hat H$.
Let $\chi_{1}\in \hat G$ and $\chi_{2}\in \hat H$, put
$$
f_{\chi_{1},\chi_{2}}: bG\times bH\to \mathbb C
$$
Defined by 
$$
f_{\chi_{1},\chi_{2}}(\gamma_{1}, \gamma_{2})= \gamma_{1}(\chi_{1})\gamma_{2}(\chi_{2})
$$
Where $bG = \Delta(Ap(G))$ is Bohr compactification of $G$. Due to the \cite{Kn} the following is the yield:
$$
f_{\chi_{1},\chi_{2}}\in C(bG\times bH)= C(bG)\check\otimes C(bH)
$$
In \cite{Kn}, showed that $A_{p}(G)$ is a commutative unital $C^{*}$- algebra of all complex-valued periodic.
If $(\gamma_{\alpha}^{1}, \gamma_{\alpha}^{2})\overset{w^{*}}{\to}(\gamma^{1}, \gamma^{2})$ in $Ap(G)^{*}$, then $\gamma_{\alpha}^{1}(f)\to \gamma^{1}(f)$ and $ \gamma_{\alpha}^{2}(g)\to \gamma^{2}(g)$ for all $f\in Ap(G)$
and $g\in A_{p}(H)$. In the special case, $<\hat G>\subseteq Ap(G)$ and $\gamma_{\alpha}^{1}(\chi_{1})\to \gamma^{1}(\chi_{1})$ and $ \gamma_{\alpha}^{2}(\chi_{2})\to \gamma^{2}(\chi_{2})$. Therefore
 $f_{\chi_{1},\chi_{2}}(\gamma_{\alpha}^{1}, \gamma_{\alpha}^{2})\to f_{\chi_{1},\chi_{2}}(\gamma_{1}, \gamma_{2})$. Then $f_{\chi_{1},\chi_{2}}$ is continuous function on $bG\times bH$ and
$$
\mid f_{\chi_{1},\chi_{2}}(\gamma_{1}, \gamma_{2})\mid = \mid \chi_{1}(\gamma_{1})\mid.\mid \chi_{2}(\gamma_{2})\mid \leq 1
$$
Therefore $f_{\chi_{1}\times \chi_{2}}\in C(bG\times bH)$. Define
$$
T: \langle f_{\chi_{1},\chi_{2}}\mid \chi_{1}\in\hat G, \chi_{2}\in\hat H\rangle \to \mathbb C
$$
by 
$$
T(\sum_{i=1}^{n} c_{i}f_{\chi_{i}^{1},\chi_{i}^{2}}):= \sum_{i=1}^{n} c_{i}\sigma(\chi_{i}^{1}\otimes \chi_{i}^{2})
$$
Where 
$$
\mid T(\sum_{i=1}^{n} c_{i}f_{\chi_{i}^{1},\chi_{i}^{2}})\mid \leq \|\sigma\|_{BSE}\|\sum_{i=1}^{n} c_{i}(\chi_{i}^{1}\otimes \chi_{i}^{2})\|
$$
This implies that $T$ is a well-defined and continuous map. Thus 
$$T\in C(bG\times bH)^{*}= M(bG\times bH)$$
 so there exists some $ \mu\in  M(bG\times bH)$ such that 
$$T(h)= \int_{bG\times bH}hd\mu$$
 for all $h\in C(bG\times bH)$.
In this special case, put $h= f_{\chi_{1},\chi_{2}}$, the following is yield:
$$
\sigma(\chi_{1}\otimes \chi_{2})= \int_{bG\times bH}f_{\chi_{1},\chi_{2}}d\mu(\gamma_{1},\gamma_{2})
$$
Put
\begin{align*}
F_{1}(\chi_{1}\otimes \chi_{2}):=\int_{bG\times bH}\gamma_{1}(\chi_{1})\gamma_{2}d\mu^{+}(\gamma_{1},\gamma_{2})\\
F_{2}(\chi_{1}\otimes \chi_{2}):=\int_{bG\times bH}\gamma_{1}(\chi_{1})\gamma_{2}d\mu^{-}(\gamma_{1},\gamma_{2})
\end{align*}
Where $\mu^{+}$ and $\mu^{-}$ are Hahn decomposition for $\mu$ such that $\mu = \mu^{+}-\mu^{-}$ and $\mu^{+}\bot \mu^{-}$. $F_{1}$ and $F_{2}$ are positive definite functons. The following is the yield
\begin{align*}
\sum_{i,j=1}^{n} c_{i}\overline{c_{j}} F_{1}(\chi_{i}^{1}\otimes\chi_{j}^{2})(\overline{\chi_{j}^{1}}\otimes\overline{ \chi_{j}^{2}}) &= \sum_{i,j=1}^{n} c_{i}\overline{c_{j}} F_{1}(\chi_{i}^{1}\overline{\chi_{j}^{1}}\otimes\chi_{j}^{2}\overline{ \chi_{j}^{2}})\\
                                                                                                                                                                                                                                        &= \int_{bG\times bH}\sum_{i,j=1}^{n} c_{i}\overline{c_{j}}\gamma_{1}(\chi_{i}^{1}\overline{\chi_{j}^{1}})\gamma_{2}(\chi_{i}^{2}\overline{\chi_{j}^{2}})d\mu^{+}(\gamma_{1},\gamma_{2})
\end{align*}
Where $\gamma_{1}\in bG= \Delta(A_{p}(G))$ and $\hat{G}\subseteq A_{p}(G)$. Thus
\begin{align*}
\sum_{i,j=1}^{n} c_{i}\overline{c_{j}} F_{1}(\chi_{i}^{1}\otimes\chi_{j}^{2})(\overline{\chi_{j}^{1}}\otimes\overline{ \chi_{j}^{2}})& = \int_{bG\times bH}\mid\sum_{i,j=1}^{n} c_{i}\overline{c_{j}}\gamma_{1}\otimes \gamma_{2}(\chi_{1}\otimes\chi_{2}\mid^{2}
                                                                                                                                                                                                                                       &\geq 0
\end{align*}
We know that $(bG\widehat)= (\hat G, \tau_{d})$. So there exists $\mu_{1}$  such that
\begin{align*}
\mu_{1} &\in M(bG\times bH\widehat)\\
              & = M((bG\widehat)\times (bH\widehat))\\
               & = M(G\times H)
\end{align*}
such that 
\begin{align*}
F(\chi_{1}\otimes \chi_{2}) &= \int_{G\times H}\chi_{1}(x)\chi_{2}(y)d\mu_{1}(x, y)\\
                                              &= \int_{\hat G_{1}}\int_{\hat H_{1}}\gamma_{1}(\chi_{1})\gamma_{2}(\chi_{2})d\mu^{+}(x,y)    
\end{align*}
Where $G_{1}= (\hat{G}, \tau_{d})$ and $H_{1}= (\hat H, \tau_{d})$. Put $S= \hat{G}\times \hat{H}$, so by using Bochner theorem, there exist $\mu_{1}, \mu_{2}\in M(\hat{S})$ such that
$$
F_{1}(\chi_{1}, \chi_{2}) = \int_{G\times H}\chi_{1}(x)\chi_{2}(y)d\mu_{1}(x, y)
$$
and 
$$
F_{2}(\chi_{1}, \chi_{2}) = \int_{G\times H}\chi_{1}(x)\chi_{2}(y)d\mu_{2}(x, y)
$$
Set $\nu = \mu_{1}-\mu_{2}\in M(G\times H)$ such that 
$$
\hat\nu(\chi_{1}\otimes \chi_{2})= \sigma(\chi_{1}\otimes\chi_{2})
$$
So $\sigma = \hat\nu$, for $\nu\in BM(G, H)$. Therefore 
$$
 C_{\textup{BSE}}(\Delta(BM_{a}(G, H)) \subseteq{{{\cal M}}(BM_{a}(G, H)\widehat)}. 
$$
This completes the proof.
\end{proof}  


	\section{Homological properties of  bimeasure algebras}
In this section, we shall prove that the bimeasure algebra $BM(G, H)$  is a $\textup{BSE}$- algebra if and only if    $G$ and $H$ are discrete. Moreover, $BM(G, H)$ is amenable as a Banach algebra if and only if $G$ and $H$ are discrete and amenable
as a group. The researchers in \cite{GI},  defined convolution on  $BM(G, H)$.
Let $G$ be a  locally compact group and $M$ is an isometry of $L^{2}(G, \mu\star\nu)$ into $L^{2}(G\times G, \mu\times \nu)$ defined by 
$$
M\varphi(x_{1}, x_{2}) = \varphi(x_{1}x_{2})
$$
for $x_{1}, x_{2}\in G$, where $\mu$ and $\nu$ are Borel probability measures on $G$. Assume that $f\in C_{b}(G)$, set
$$
\check{f}(x)= f(x^{-1})   ~~~~ and ~~~~ f^{*}(x)= \overline{f(x^{-1})} \quad x\in G
$$
Let $\mu_{1}, \mu_{2}\in BM(G, H)$. Put 
$$
\mu_{1}\star \mu_{2}(f, g)= \mu_{1}\otimes\mu_{2}(Mf, Mg)
$$
and 
\begin{align}\label{r6}
\check{\mu_{1}}(f, g)= \mu_{1}(\check{f}, \check{g}) \quad  \tilde{\mu_{1}}(f, g)= \overline{\mu_{1}(f^{*}, g^{*})}
\end{align}
where $f\in C_{0}(G)$ and $g\in C_{0}(H)$. The multiplication and the adjoint operation (\ref{r6}), define a *- algebra on $BM(G, H)$ which extends the *- algebra structure of $M(G\times H)$. If $\mu, \nu\in BM(G, H)$, then $\|\tilde\mu\|= \|\mu\|$ and
$$
\|\mu\star\nu\|\leq K_{G}^{2}\|\mu\|\|\nu\|.
$$
See [\cite{GI}, Theorem2.8].
The following terminologies and the related information are extracted from \cite{he}  and \cite{mer}.
Assume that $A$ is a Banach algebra and $E$ is a Banach $A$- bimodule. A bounded linear operator $D : A\to E$ is called a derivation if the following is the yield
$$
D(ab) = D(a)b+ aD(b) 
$$
for all $a,b \in A$.  $A$ is named a weakly amenable if  every continuous derivation  $D: A\to A^{*}$ is inner. Let $P$ be a Banach left $A$- module. Then $P$ is called projective(respectively, flat) if for every Banach left $A$- module
$E$ and $F$ , for each admissible epimorphism ( respectively, monomorphism) $T\in {}_ A\!{\cal B}(E, F)$ and all $V\in {}_ A\!{\cal B}(P, F)$ (respectively, $V^{'}\in {}_ A\!{\cal B}(E, P^{*})$), there exists $U\in {}_ A\!{\cal B}(P, E)$ 
(respectively, $U^{'}\in {}_ A\!{\cal B}(F, P^{*})$) such that
$$
ToU= V \quad ( respectively,~ U^{'}oT = V^{'})
$$
Where  ${}_ A\!{\cal B}(E, F)$ is the set of all bounded left $A$- module homomorphisms from $E$ into $F$. If $A$ is a projective (respectively, flat) as a Banach $A$- bimodule, then $A$ is named biprojective (respectively, biflat).
The Banach algebra $A$ is amenable if and only if $A$ biflat and it has bounded approximate identity; see \cite{D}.  In a special case, $BM(G,H)$ has an identity, so $BM(G,H)$ is biflat if and only if it is amenable.
\begin{thm}
Let $G$ and $H$ be the LCA  groups. Then $BM(G, H)$ is a $\textup{BSE}$- algebra if and only if    $G$ and $H$ are discrete.
\end{thm}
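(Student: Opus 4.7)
The proof splits into two implications; the reverse is essentially immediate, while the forward direction is the substantive part.

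For $(\Leftarrow)$, assume $G$ and $H$ are discrete. Then $M(G)=L^{1}(G)$ and $M(H)=L^{1}(H)$, so $M(G\times H)=L^{1}(G\times H)$. By [\cite{GB}, Theorem 5.12] the $\|\cdot\|_{BM}$-closure of $M(G\times H)$ is all of $BM(G,H)$, and this closure now coincides with $\overline{L^{1}(G\times H)}^{\|\cdot\|_{BM}}=BM_{a}(G,H)$. Hence $BM(G,H)=BM_{a}(G,H)$, which is a $\textup{BSE}$-algebra by the theorem of the previous section.

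For $(\Rightarrow)$, I would argue by contrapositive: assume $G$ is non-discrete (the case of $H$ is symmetric) and show that $BM(G,H)$ cannot be $\textup{BSE}$. Since $BM(G,H)$ is unital, the $\textup{BSE}$ condition reads $\widehat{BM(G,H)}\mid_{\Delta(BM(G,H))}=C_{\textup{BSE}}(\Delta(BM(G,H)))$. Because $BM_{a}(G,H)$ is a closed ideal of $BM(G,H)$ with bounded approximate identity, every character of $BM_{a}(G,H)$ extends uniquely, giving the inclusion $\hat{G}\times\hat{H}=\Delta(BM_{a}(G,H))\subseteq\Delta(BM(G,H))$; moreover, the previous theorem provides $\widehat{BM(G,H)}\mid_{\hat{G}\times\hat{H}}=C_{\textup{BSE}}(\hat{G}\times\hat{H})$. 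The contradiction will come from exhibiting a $\textup{BSE}$ function on the strictly larger set $\Delta(BM(G,H))$ that is not the Gelfand transform of any bimeasure.

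The natural source for such witnesses is the classical failure of the $\textup{BSE}$ property for $M(G\times H)$ when $G\times H$ is non-discrete: generalized characters of $M(G\times H)$ outside $\hat{G}\times\hat{H}$ extend to $\Delta(BM(G,H))$ through the continuous inclusion $M(G\times H)\hookrightarrow BM(G,H)$ supplied by the lemma of Section~2, and the $\textup{BSE}$ functions witnessing that failure should lift to $\Delta(BM(G,H))$ outside $\widehat{BM(G,H)}$. The principal obstacle is exactly this transport step, that is, confirming that a lifted witness truly fails to be a Fourier--Stieltjes transform on $BM(G,H)$. The cleanest route is to first handle the mixed case where one of $G,H$ is discrete and the other is not, in which Corollary~\ref{cbse} identifies $C_{\textup{BSE}}(\Delta(BM(G,H)))$ with $C_{\textup{BSE}}(\Delta(M(G\times H)))$ and reduces the problem to the classical non-$\textup{BSE}$ status of $M(G\times H)$; the case in which both $G$ and $H$ are non-discrete is then handled by comparing $\widehat{BM(G,H)}$ with $\widehat{M(G\times H)}$ on $\hat{G}\times\hat{H}$ via the Grothendieck-type control already established in Section~2.
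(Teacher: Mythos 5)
Your $(\Leftarrow)$ direction is correct and takes a mildly different route from the paper: you identify $BM(G,H)$ with $BM_{a}(G,H)$ (using $M(G\times H)=\ell^{1}(G\times H)=L^{1}(G\times H)$ for discrete groups together with the density statement of [\cite{GB}, Theorem 5.12]) and then quote the theorem that $BM_{a}(G,H)$ is a $\textup{BSE}$-algebra; since $\mathcal{M}(BM_{a}(G,H))=BM(G,H)$, this is literally the $\textup{BSE}$ condition for the unital algebra $BM(G,H)$. The paper instead combines the classical fact that $M(G\times H)$ is $\textup{BSE}$ for discrete $G\times H$ with Corollary~\ref{cbse}. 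Both work; yours avoids re-identifying the two $C_{\textup{BSE}}$ spaces and is arguably cleaner.

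The $(\Rightarrow)$ direction, however, is a plan rather than a proof, and the step you yourself label ``the principal obstacle'' is precisely the substantive content of this implication; it is left unresolved. What is needed is a function $\sigma\in C_{\textup{BSE}}(\Delta(BM(G,H)))$ that is not of the form $\widehat{T}$ for \emph{any} $T\in BM(G,H)$ --- not merely for any $\mu\in M(G\times H)$. Your mixed-case reduction does not deliver this: Corollary~\ref{cbse} identifies $C_{\textup{BSE}}(\Delta(BM(G,H)))$ with $C_{\textup{BSE}}(\Delta(M(G\times H)))$, and the non-$\textup{BSE}$ status of $M(G\times H)$ produces a $\textup{BSE}$ function $\sigma\notin\widehat{M(G\times H)}$; but $M(G\times H)$ is only \emph{dense} in, not equal to, $BM(G,H)$, so $\widehat{BM(G,H)}$ is in general a strictly larger set of functions, and you must still exclude $\sigma=\widehat{T}$ for $T\in BM(G,H)\setminus M(G\times H)$. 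The case where both groups are non-discrete is even less developed (``comparing via the Grothendieck-type control'' is not an argument). For comparison, the paper's proof constructs an explicit witness: it takes $\mu\in M(G\times H)$ whose transform does not vanish identically on $\Delta(BM(G,H))\setminus(\hat{G}\times\hat{H})$ and considers the function equal to $\hat{\mu}$ on $\hat{G}\times\hat{H}$ and $0$ elsewhere, which cannot equal $\hat{v}$ for any $v\in BM(G,H)$ because a multiplier of $BM_{a}(G,H)$ is determined by its transform on $\Delta(BM_{a}(G,H))=\hat{G}\times\hat{H}$. To complete your argument you would need to carry out a construction of this kind, and also verify that the truncated function is in fact a $\textup{BSE}$ function --- a verification that, it should be said, the paper itself also omits.
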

\begin{proof}
According to [\cite{GB}, Theorem5.12] $M(G\times H)$ is dense in  $BM(G, H)$ if and only if either $G$ or $H$ is a discrete group. 
Let  $G$ and $H$ be discrete groups.Then $G\times H$ is a discrete group, so $M(G\times H)$ is a BSE- algebra and $\overline{M(G\times H)}= BM(G,H)$. Due to Corrolary \ref{cbse}, the following is the yield:
$$
C_{BSE}(\Delta(BM(G,H)))= C_{BSE}(\Delta(M(G\times H))) = {BM(G,H\widehat)}
$$
This completes the proof.\\
Conversely, suppose, say, $G$ or $H$ is non-discrete, so $G\times H$ is non-discrete. Then there exists some $\mu\in M(G\times H)$ such that $\hat\mu\mid_{\Delta(M(G\times H))\setminus \hat G\times \hat H}\not\equiv 0$.
Therefore $\mu \in BM(G, H)$ and $\hat\mu\mid_{\Delta(BM(G, H))\setminus \hat G\times \hat H}\not\equiv 0$. Define 
\begin{align*}
f(\gamma)=
\begin{cases} 
  \check\mu(\gamma),  & \mbox{if }\gamma\in\hat G\times \hat H \\  0, & \mbox{if }\gamma\in \Delta(BM(G,H))\setminus \hat G\times \hat H
\end{cases}
\end{align*}
but $f\neq \hat v$ for all $v\in BM(G, H)$. It is a contradiction and so $G$ and $H$ are discrete.
\end{proof}
The algebra $BM(G, H)$ has the identity $E= \delta e_{1}\otimes \delta e_{2}$ such that 
$$
E(f\otimes g)= f( e_{1})g( e_{2})
$$
Where $e_{1}= 1_{G}$ and $e_{2}= 1_{H}$, $f\in C_{0}(G)$ and $g\in C_{0}(H)$.
\begin{thm}
Let $G$ and $H$ be the locally compact groups. Put  $A= BM(G,H)$ and   $B= M(G\times H)$. Then the following are equivalent:\\
(i) $A$ is amenable Banach algebra.\\
(ii) $B$ is amenable Banach algebra.\\
(iii) $G$ and $H$ are discrete and amenable.
\end{thm}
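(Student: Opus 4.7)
The plan is to establish the three-way equivalence via the cycle (iii) $\Rightarrow$ (ii) $\Rightarrow$ (i) $\Rightarrow$ (iii), reducing the problem to the classical Dales--Ghahramani--Helemskii characterization of amenable measure algebras combined with the structural bridge between $BM(G,H)$, $BM_a(G,H)$ and $M(G\times H)$ developed in the preceding sections. The equivalence (ii) $\Leftrightarrow$ (iii) is the direct application of that classical theorem to $\Gamma = G\times H$, using that discreteness and amenability of the product group are each equivalent to the corresponding property for both $G$ and $H$.

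For (i) $\Rightarrow$ (iii), I would use that $BM_a(G,H)$ is a closed two-sided ideal of $BM(G,H)$ admitting a bounded approximate identity, so amenability of $BM(G,H)$ passes to $BM_a(G,H)$ by the standard hereditary property. Since $L^1(G\times H)$ sits densely in $BM_a(G,H)$ and, by [\cite{GI}, Theorem 4.5], ${\cal M}(BM_a(G,H)) = BM(G,H)$, amenability of $BM_a(G,H)$ can be transferred down to $L^1(G\times H)$, whereupon Johnson's theorem forces $G\times H$ to be amenable and hence both $G$ and $H$ are amenable. For discreteness I would argue by contradiction: if $G$ or $H$ were non-discrete, the construction appearing in the proof of the preceding theorem produces a bimeasure whose Gelfand transform violates the BSE inequality on $\Delta(BM(G,H))\setminus(\hat G\times\hat H)$, which one then combines with Kaniuth-style results that tie amenability of a commutative Banach algebra to its satisfying the BSE condition on multiplier transforms, reaching a contradiction.

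For (iii) $\Rightarrow$ (i), when $G$ and $H$ are discrete and amenable, [\cite{GB}, Theorem 5.12] and Corollary \ref{cbse} align the closures and character spaces of $M(G\times H)$ and $BM(G,H)$, while $M(G\times H) = \ell^1(G\times H)$ is amenable by (ii). The strategy is to take a virtual diagonal for $\ell^1(G\times H)$ coming from the invariant mean on $G\times H$, and promote it to a virtual diagonal for $BM(G,H)$ sitting in $(BM(G,H)\hat\otimes BM(G,H))^{**}$. The main obstacle is precisely this promotion: for infinite discrete groups the projective and injective tensor norms on $C_0(G)\otimes C_0(H)$ disagree, so $BM(G,H)$ is strictly larger than $M(G\times H)$ as a Banach algebra and amenability does not automatically extend from a dense subalgebra to its completion. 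The crux lies in a careful norm comparison showing the approximate diagonal of $\ell^1(G\times H)$ stays bounded in the projective $BM$-tensor norm, after which a weak$^{*}$-compactness argument in the bidual extracts the required virtual diagonal for $BM(G,H)$.
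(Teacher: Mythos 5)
Your handling of (ii)$\Leftrightarrow$(iii) matches the paper (both are the Dales--Ghahramani--Helemskii theorem \cite{DG} applied to $G\times H$), and your (iii)$\Rightarrow$(i) is essentially the paper's argument, except that the ``main obstacle'' you identify is not actually an obstacle: since $\|h\|_{\epsilon}\leq\|h\|_{\pi}$ on $C_{0}(G)\otimes C_{0}(H)$, the inclusion $M(G\times H)\hookrightarrow BM(G,H)$ is a norm-decreasing algebra homomorphism, and by [\cite{GB}, Theorem 5.12] it has dense range when $G$ and $H$ are discrete; amenability always passes along a continuous homomorphism with dense range (e.g. \cite{rn}), so the bounded approximate diagonal of $\ell^{1}(G\times H)=M(G\times H)$ transports to $BM(G,H)$ with no further norm analysis. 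This is exactly the one-line step the paper takes.

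The genuine gap is in your (i)$\Rightarrow$(iii). First, amenability does not ``transfer down'' from $BM_{a}(G,H)$ to its dense subalgebra $L^{1}(G\times H)$: the hereditary properties run the other way (closed ideals with bounded approximate identity inherit amenability; dense-range homomorphisms push it forward), and $L^{1}(G\times H)$ carries a strictly finer norm than the one it inherits from $BM(G,H)$, so Johnson's theorem cannot be invoked this way without a new argument. Second, your discreteness step leans on an unstated link between amenability and the BSE property that is established nowhere in the paper or its references, and it uses $\Delta(BM(G,H))\supseteq\hat G\times\hat H$, which presupposes $G$ and $H$ abelian, whereas the theorem is asserted for arbitrary locally compact groups. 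The paper avoids both issues by proving (i)$\Rightarrow$(ii) directly: any Banach $M(G\times H)$-bimodule is turned into a $BM(G,H)$-bimodule by passing between functionals on the projective and injective tensor products, a derivation $D:M(G\times H)\to X^{*}$ is pulled back to one on $BM(G,H)$, innerness is inherited, and then \cite{DG} applied to $M(G\times H)$ delivers discreteness and amenability of $G\times H$ simultaneously. To repair your cycle you would need either a self-contained proof that amenability of $BM(G,H)$ forces $G$ and $H$ to be discrete in the non-abelian setting, or to route the implication through $M(G\times H)$ as the paper does.
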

\begin{proof}
($i \to  ii$) Let $X$ be a $B$- bimodule and $D: B\to X^{*}$ be a continuous derivation. It is obvious that $X$ is a $A$- bimodule. Assume that $T\in BM(G,H)$. \\
Define $T^{'}\in M(G\times H)$ as the following: \\
If $h\in C_{0}(G)\check\otimes C_{0}(H)$, then  there exists some sequence $(h_{n})\subseteq C_{0}(G)\hat\otimes C_{0}(H)$ such that $h_{n}\to h$ in injective topology.Now,define 
$$
T^{'}(h):= \underset{n\to\infty}{lim} T(h_{n})
$$
At this stage, $X$ is a Banach A- bimodule with the products
$$
~T.x:= T^{'}.x, \\
 ~x.T:= x.T^{'}
$$
Assume that $D: A\to X^{*}$ is an inner derivation. So $D^{'}: A\to X^{*}$ where $D^{'}(T):= D(T^{'})$ is an inner derivation, since $A$ is amenable there is $f\in X^{*}$ such that $D^{'}= ad_{f}$.
It is obvious that 
$$
C_{0}(G)\hat\otimes C_{0}(H)\subseteq C_{0}(G)\check\otimes C_{0}(H)
$$
So if $T\in B$, then
$$
T: C_{0}(G)\check\otimes C_{0}(H)\to \mathbb C  
$$
Thus there is some $u\in A$ such that $u:= T\mid_{C_{0}(G)\hat\otimes C_{0}(H)}$, so $T= u^{'}$. The following is the yield:
$$
D(T)= D(u^{'})= D^{'}(u)= ad_{f}(u)= ad_{f}(u^{'})= ad_{f}(T)
$$
Consequently $D$ is an inner derivation and $M(G\times H)$ is amenable.\\
($ii\to iii$) Assume that $M(G\times H)$ is amenable. Due to the \cite{DG}, $G\times H$ is discrete and amenable, so $G$ and $H$ are discrete and amenable.\\
($iii\to i$)  Assume that  $G$ and $H$ are discrete and amenable groups. Consequently $M(G\times H)$ is amenable and by applying Theorem6.2 in \cite{GI}, $M(G\times H)$ is dense in $BM(G,H)$.
Because $M(G\times H)$ is amenable, then $BM(G,H)$ is amenable.
\end{proof}
\begin{cor}
$BM(G,H)$  is biflat if and only if $G$ and $H$ are discrete and amenable groups.
\end{cor}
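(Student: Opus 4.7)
The plan is to reduce this corollary directly to the preceding amenability theorem via the classical Helemski\u\i{} characterization of amenability in terms of biflatness plus a bounded approximate identity. Since this characterization is already quoted in the text (``$A$ is amenable if and only if $A$ is biflat and has a bounded approximate identity''), the task is essentially to verify the bounded approximate identity hypothesis and invoke the previous theorem.

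First I would recall that $BM(G,H)$ is a unital Banach algebra with identity $E=\delta_{e_{1}}\otimes\delta_{e_{2}}$, as displayed just before the amenability theorem. In particular, the singleton $\{E\}$ is a bounded approximate identity for $BM(G,H)$. Hence the Helemski\u\i{} equivalence collapses on $BM(G,H)$ to the statement that $BM(G,H)$ is biflat if and only if it is amenable, which is exactly the parenthetical remark already made in the text just before the corollary.

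Next I would combine this equivalence with the preceding theorem, which asserts that $BM(G,H)$ is amenable precisely when $G$ and $H$ are discrete and amenable. Chaining the two equivalences gives
\begin{equation*}
BM(G,H)\text{ biflat}\;\Longleftrightarrow\;BM(G,H)\text{ amenable}\;\Longleftrightarrow\;G\text{ and }H\text{ are discrete and amenable},
\end{equation*}
which is the claim.

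There is no substantive obstacle here; the content is entirely a bookkeeping step that bundles together the unitality of $BM(G,H)$, the Helemski\u\i{} theorem, and the preceding characterization of amenability for $BM(G,H)$. The only care needed is to state explicitly that the existence of the identity $E$ supplies the required bounded approximate identity, so that the ``biflat $+$ b.a.i. $\Leftrightarrow$ amenable'' equivalence is available in both directions.
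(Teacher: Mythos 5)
Your argument is correct and is exactly the route the paper intends: the remark preceding the theorem already notes that $BM(G,H)$ is unital, hence biflat if and only if amenable by the Helemski\u\i{} characterization, and the corollary then follows immediately from the amenability theorem. No further comment is needed.
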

\begin{thm}
Let $G$ and $H$ be the  locally compact groups. Put  $A= BM(G,H)$ and   $B= M(G\times H)$. Then the following are equivalent:\\
(i) $A$ is weakly amenable Banach algebra;\\
(ii)  $B$ weakly amenable Banach algebra;\\
(iii) $G$ and $H$ are discrete groups.
\end{thm}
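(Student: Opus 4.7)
My plan is to mirror the scheme of the preceding amenability theorem, replacing Johnson's characterisation of amenable measure algebras by the Dales--Ghahramani--Helemskii theorem, which asserts that for a locally compact group $K$ the measure algebra $M(K)$ is weakly amenable if and only if $K$ is discrete. Applied to $K=G\times H$ this immediately yields $(ii)\Leftrightarrow(iii)$, since $G\times H$ is discrete exactly when both $G$ and $H$ are.

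For $(iii)\Rightarrow(i)$, suppose $G$ and $H$ are discrete. Then by $(ii)$, $B=M(G\times H)$ is weakly amenable, and by [\cite{GB}, Theorem 5.12], $B$ is dense in $A=BM(G,H)$; moreover, discreteness of $G\times H$ forces $L^{1}(G\times H)=M(G\times H)$, hence $BM_{a}(G,H)=\overline{M(G\times H)}=BM(G,H)$, so $A$ inherits the approximate identity structure of $BM_{a}(G,H)$. Given a continuous derivation $D:A\to A^{*}$, I would compose with the restriction map $r:A^{*}\to B^{*}$, which is a $B$-bimodule homomorphism, to obtain a continuous derivation $r\circ D|_{B}:B\to B^{*}$; weak amenability of $B$ then produces $f\in B^{*}$ with $r\circ D|_{B}=\mathrm{ad}_{f}$. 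The final step is to lift $f$ via Hahn--Banach to $\tilde f\in A^{*}$ and use density of $B$ in $A$ together with continuity of $D$ and of $\mathrm{ad}_{\tilde f}$ to propagate the identity $D=\mathrm{ad}_{\tilde f}$ from $B$ to all of $A$.

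For the converse $(i)\Rightarrow(iii)$, I would argue by contrapositive. If, say, $G$ is non-discrete, then $G\times H$ is non-discrete and by $(ii)\Leftrightarrow(iii)$ there exists a non-inner continuous derivation $D_{0}:B\to B^{*}$. Using the extension procedure $T\mapsto T^{\prime}$ introduced in the amenability theorem, together with the restriction-induced $B$-bimodule structure on $A^{*}$, I would promote $D_{0}$ to a continuous derivation $D:A\to A^{*}$ whose composition with $r$ recovers $D_{0}$; any inner implementer for $D$ on $A$ would then restrict to an inner implementer for $D_{0}$ on $B$, contradicting the choice of $D_{0}$.

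The main obstacle is the two-way transfer of weak amenability between $A$ and $B$. Unlike amenability, where the coefficient bimodule may be chosen freely and adapted to the algebra at hand, weak amenability fixes the bimodule as the algebra's own dual, and $A^{*}$ and $B^{*}$ are genuinely different Banach spaces even when $B$ is dense in $A$. The delicate point in the $(iii)\Rightarrow(i)$ direction is showing that the Hahn--Banach lift $\tilde f$ really implements $D$ on all of $A$, not just after restriction to $B$; this requires combining the density of $B$ with appropriate weak$^{*}$-continuity of $\mathrm{ad}_{\tilde f}$. In the reverse direction, the extension $T\mapsto T^{\prime}$ used in the amenability theorem relies on a limit whose existence is not a priori clear, so verifying that the pulled-back derivation $D$ is genuinely well-defined on $A$ is where most of the care must be taken.
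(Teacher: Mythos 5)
Your handling of $(ii)\Leftrightarrow(iii)$ and of $(iii)\Rightarrow(i)$ agrees in substance with the paper. The paper also reduces $(ii)\Leftrightarrow(iii)$ to the theorem that $M(K)$ is weakly amenable iff $K$ is discrete, applied to $K=G\times H$; and for $(iii)\Rightarrow(i)$ it simply cites \cite{mer} for the fact that weak amenability passes from a dense subalgebra to the ambient algebra. Your restriction-map argument --- compose $D$ with $r=\iota^{*}:A^{*}\to B^{*}$, implement the resulting derivation on $B$ by some $f\in B^{*}$, lift $f$ by Hahn--Banach, and observe that two continuous functionals on $A$ which agree on the dense subspace $B$ coincide, so that $D$ and $\mathrm{ad}_{\tilde f}$ agree on $B$ and hence on $A$ --- is precisely the standard proof of that cited transfer lemma, so this direction is sound (granted, as both you and the paper grant, the density of $M(G\times H)$ in $BM(G,H)$ for $G,H$ discrete).

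The genuine gap is in $(i)\Rightarrow(iii)$, where you diverge from the paper. Your contrapositive needs to promote a non-inner continuous derivation $D_{0}:B\to B^{*}$ to a continuous derivation $D:A\to A^{*}$, and the only mechanism you offer is the extension map $T\mapsto T'$, whose defining limit you yourself admit is not known to exist. The situation is worse than a technicality: by [\cite{GB}, Theorem 5.12], $B$ is dense in $A$ only when $G$ or $H$ is discrete, so in the case where both are non-discrete $B$ is a proper, non-dense closed subalgebra of $A$, and there is no general procedure for extending a derivation from a closed subalgebra to the ambient algebra, let alone one that preserves non-innerness; even in the mixed case where $B$ is dense, $D_{0}$ is continuous only for the (larger) norm of $B$ and takes values in $B^{*}$, which strictly contains $A^{*}$, so it need not extend to $A$ with values in $A^{*}$. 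The paper avoids the extension problem entirely by going downward: it restricts weak amenability of $A$ to the subalgebras $M(G)$ and $M(H)$ and applies the measure-algebra theorem to each factor. That hereditary claim is itself asserted without proof (weak amenability does not pass to closed subalgebras in general), but it is a different, and more tractable, missing step than the one your route requires; as written, your $(i)\Rightarrow(iii)$ is not established.
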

\begin{proof}
($i\to iii$) If $A$  is weakly amenable, then the subalgebras $M(G)$ and $M(H)$ are  weakly amenable. Therefore $G$ and $H$ are discrete groups.\\
($iii\to i$) Due to \cite{mer}, since ${\overline B}= A$ and $B$  is weakly amenable, $A$ is weakly amenable.\\
($ii\leftrightarrow iii$) $B$ is weakly amenable Banach algebra if and only if $G\times H$ is discrete if and only if $G$ and $H$ are discrete. 
\end{proof}

\begin{thm}
Let $G$ and $H$ be the locally compact Abelian groups. Then the following are equivalent:\\
(i) $BM(G,H)$ is a contractive Banach algebra;\\
(ii) $M(G\times H)$  is a contractive Banach algebra;\\
(iii) $G$ and $H$ are finite groups.
\end{thm}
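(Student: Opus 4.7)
The plan is to reduce contractivity to biprojectivity in both algebras, since $BM(G,H)$ and $M(G\times H)$ are both unital, and then invoke the biprojectivity characterisation that was already established in the paper. Recall the classical Helemskii criterion: a unital Banach algebra is contractive (super-amenable) if and only if it is biprojective. Since $BM(G,H)$ has the identity $E=\delta_{e_1}\otimes\delta_{e_2}$ exhibited just before this theorem, and $M(G\times H)$ is unital with identity $\delta_{(e_1,e_2)}$, contractivity and biprojectivity are equivalent for each of these algebras.

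The easy direction is (iii)~$\Rightarrow$~(i),(ii). If $G$ and $H$ are finite then $G\times H$ is finite, so $M(G\times H)=\mathbb{C}[G\times H]$ is a finite-dimensional commutative semisimple Banach algebra, hence contractive (any finite-dimensional semisimple Banach algebra is). Since both groups are discrete, \cite{GB}, Theorem 5.12 gives $\overline{M(G\times H)}=BM(G,H)$, and in the finite case this containment is an equality, so $BM(G,H)=M(G\times H)$ is contractive as well.

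For (i)~$\Rightarrow$~(iii) I would argue: contractivity of $BM(G,H)$, combined with unitality, implies biprojectivity of $BM(G,H)$; by the biprojectivity characterisation announced in the abstract and proved earlier in this section, this forces $G$ and $H$ to be finite. The implication (ii)~$\Rightarrow$~(iii) is analogous but uses the classical Helemskii--Selivanov theorem: $M(K)$ is biprojective if and only if $K$ is a finite group, applied with $K=G\times H$.

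The main obstacle I foresee is simply citing the contractive $\Leftrightarrow$ biprojective$+$unital equivalence in the precise form needed (different monographs phrase "super-amenable/contractive" slightly differently), and making sure the biprojectivity-of-$BM(G,H)$ result is available at the point of use. An alternative route, avoiding contractivity-vs-biprojectivity altogether, would be to note that contractivity implies amenability, apply the previous theorem to deduce that $G$ and $H$ are discrete and amenable (so that $BM(G,H)=M(G\times H)$), and then use the known fact that a contractive group measure algebra must come from a finite group; this reduces everything to the single step (ii)~$\Rightarrow$~(iii). I would probably include both routes for robustness.
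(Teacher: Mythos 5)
Your proposal has a genuine gap in the hard direction (i)\,$\Rightarrow$\,(ii)/(iii), on both of the routes you offer. The primary route is circular relative to this paper: the biprojectivity characterisation of $BM(G,H)$ is not ``proved earlier in this section'' --- it is the \emph{next} theorem, and its proof runs exactly through the present one (biprojective $+$ unital $\Rightarrow$ contractive $\Rightarrow$ amenable $\Rightarrow$ discrete $\Rightarrow$ finite). So invoking it here begs the question; you would have to prove biprojectivity $\Rightarrow$ finiteness independently, which is precisely the content you are trying to establish. (Also, the classical Helemskii--Selivanov theorem is about $L^1(K)$ being biprojective iff $K$ is compact, not about $M(K)$; the $M(K)$ statement itself needs the Dales--Ghahramani--Helemskii amenability theorem plus the $L^1$ case.)

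The alternative route founders on the parenthetical ``so that $BM(G,H)=M(G\times H)$''. For discrete $G,H$, [GB, Theorem~5.12] gives only that $M(G\times H)$ is \emph{dense} in $BM(G,H)$; for infinite discrete groups the inclusion $M(G\times H)\subseteq BM(G,H)=(c_0(G)\widehat\otimes c_0(H))^*$ is proper, and contractibility passes \emph{forward} along dense-range homomorphisms, not backward to a dense subalgebra. So you cannot conclude that $M(G\times H)$ is contractive from contractivity of $BM(G,H)$ in this way. The ingredient you are missing is the one the paper actually uses: by Runde's result (cited as \cite{rn}), a commutative contractive Banach algebra is finite-dimensional. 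Hence $BM(G,H)$ contractive forces $\dim BM(G,H)<\infty$, so its subalgebra $M(G\times H)$ is finite-dimensional, $G\times H$ is finite, and all the algebras $L^1(G\times H)=M(G\times H)=BM(G,H)$ coincide; this yields (i)\,$\Rightarrow$\,(ii) at once, after which (ii)\,$\Rightarrow$\,(iii) goes as you and the paper both describe (contractive $\Rightarrow$ amenable $\Rightarrow$ discrete $\Rightarrow$ $L^1(G\times H)$ contractive $\Rightarrow$ $G\times H$ finite). Your direction (iii)\,$\Rightarrow$\,(i),(ii) is fine.
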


\begin{proof}
($i\to ii$) Due to \cite{rn}, if $BM(G,H)$ is a contractive Banach algebra, then it is finite-dimensional and so 
$$
M(G\times H)= BM(G,H)
$$
is a contractive Banach algebra.\\
($ii\to iii$) If $M(G\times H)$   is a contractive Banach algebra, then $M(G\times H)$ is an amenable Banach algebra, so the groups $G$ and $H$ are discrete and $ L_{1}(G\times H)$ is a contractive algebra. Therefore the groups $G$ and $H$ are finite groups. \\
($iii\to i$) If  $G$ and $H$ are finite groups, then 
$$
C_{0}(G)\hat\otimes C_{0}(H)= C_{0}(G)\check\otimes C_{0}(H)
$$
Then 
$$
 L_{1}(G\times H)= M(G\times H)= BM(G, H)
$$
 is a contractive algebra.
\end{proof}

\begin{thm}
Let $G$ and $H$ be the locally compact Abelian groups. Then the following are equivalent:\\
(i)  $BM(G,  H)$  is a biprojective Banach algebra;\\
(ii) $G$ and $H$  are finite groups.
\end{thm}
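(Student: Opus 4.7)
The plan is to reduce this theorem to the previous characterization of contractivity of $BM(G,H)$ via the standard homological fact that a unital Banach algebra is contractive (super-amenable in Helemskii's terminology) if and only if it is biprojective. Since the excerpt has already exhibited the identity $E = \delta_{e_{1}} \otimes \delta_{e_{2}}$ of $BM(G,H)$, this reduction will deliver the theorem immediately.

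For the direction (ii) $\Rightarrow$ (i), if $G$ and $H$ are finite then $C_{0}(G)$ and $C_{0}(H)$ are finite-dimensional, so the projective and injective tensor norms on $C_{0}(G) \otimes C_{0}(H)$ coincide, and hence $BM(G,H) = M(G \times H) = \mathbb{C}[G \times H]$ is a finite-dimensional, semisimple, commutative Banach algebra. As a finite direct sum of copies of $\mathbb{C}$, it is trivially biprojective: the multiplication map admits an obvious bimodule section constructed coordinatewise from the primitive idempotents.

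For the direction (i) $\Rightarrow$ (ii), suppose $BM(G,H)$ is biprojective, and let $\rho : BM(G,H) \to BM(G,H) \hat\otimes BM(G,H)$ be a bounded $BM(G,H)$-bimodule splitting of the multiplication map $\pi$. Setting $d := \rho(E)$, the bimodule property of $\rho$ together with the fact that $E$ is a two-sided identity yields $\mu \cdot d = \rho(\mu) = d \cdot \mu$ for every $\mu \in BM(G,H)$ and $\pi(d) = E$. Such an element $d$ is a diagonal for $BM(G,H)$, and by a standard Hochschild cohomology argument (contracting $d$ against any continuous derivation into a Banach bimodule) this forces every such derivation to be inner, i.e., $BM(G,H)$ is contractive. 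The previous theorem then yields that $G$ and $H$ are finite.

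The main obstacle is essentially bookkeeping: justifying the equivalence ``unital plus biprojective equals contractive.'' All the substance lies in the diagonal construction sketched above and can be cited directly from Helemskii's monograph on Banach homology. Once this equivalence is invoked, the rest of the argument is a one-line appeal to the preceding characterization of contractivity of $BM(G,H)$.
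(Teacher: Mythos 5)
Your proposal is correct and follows essentially the same route as the paper: both reduce the forward direction to the preceding contractivity theorem via the standard fact that a unital biprojective Banach algebra is contractible (using the identity $E=\delta_{e_{1}}\otimes\delta_{e_{2}}$), and both treat the converse as the finite-dimensional case where $BM(G,H)=M(G\times H)$. Your write-up merely supplies more detail (the diagonal $d=\rho(E)$ and the primitive-idempotent splitting) than the paper's terser argument.
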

\begin{proof}
 If $BM(G, H)$   is a biprojective Banach algebra, since it is unital then it is a contractive Banach algebra. Thus it is a amenable Banach algebra, so  $G$ and $H$ are discrete groups, then $ L_{1}(G\times H)$ is a contractive Banach algebra, as a result
$G\times H$ is a finite group and so $G$ and $H$ are finite groups.

  Conversely,it is obvious.
\end{proof}

\begin{thm}
Let $G$ and $H$ be locally compact groups. Then:\\
(i) $BM_{a}(G, H)$ is a contractive Banach algebra if and only if  $G$ and $H$ are finite groups.\\
(ii)  $BM_{a}(G, H)$ is a biprojective Banach algebra if and only if  $G$ and $H$ are compact groups.\\
(iii) $BM_{a}(G, H)$ is an amenable Banach algebra if and only if  $G$ and $H$ are amenable groups.\\
(iv) $BM_{a}(G, H)$ is biflat Banach algebra if and only if  $G$ and $H$ are amenable groups.\\
(v) Always, $BM_{a}(G, H)$  is weakly amenable. 
\end{thm}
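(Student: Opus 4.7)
The plan is to reduce each of the five assertions to the well-known homological classification theorems for the group algebra $L^{1}(K)$, applied to $K = G\times H$, by exploiting the identification $BM_{a}(G,H)\cong L^{1}(G\times H)$. The embedding $f\mapsto \mu_{f}$ from $L^{1}(G\times H)$ into $BM(G,H)$ has dense image in $BM_{a}(G,H)$ by definition, and the results of \cite{GB, GI} quoted earlier in the paper (in particular that $BM_{a}(G,H)$ plays inside $BM(G,H)$ exactly the role $L^{1}(G\times H)$ plays inside $M(G\times H)$, and the approximate-identity facts used in the BSE section) give an isometric isomorphism of Banach $*$-algebras. Under this identification every assertion becomes a classical characterization.

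For part (v) I would cite Johnson's theorem that $L^{1}(K)$ is weakly amenable for every locally compact group $K$; specializing to $K=G\times H$ this is immediate. For part (iii) I would invoke Johnson's theorem that $L^{1}(K)$ is amenable iff $K$ is amenable, together with the elementary fact that $G\times H$ is amenable iff both $G$ and $H$ are amenable. Part (iv) follows from the Helemski\u{\i}-type equivalence that $L^{1}(K)$ is biflat iff $K$ is amenable; alternatively, since $BM_{a}(G,H)$ has a bounded approximate identity (as remarked earlier), biflatness collapses to amenability and one reuses (iii).

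For part (ii) I would apply Helemski\u{\i}'s theorem that $L^{1}(K)$ is biprojective iff $K$ is compact, combined with the fact that $G\times H$ is compact iff $G$ and $H$ are both compact. For part (i), contractivity (super-amenability) of $L^{1}(K)$ is equivalent to $K$ being finite; this handles the forward direction. The converse in each case (finite/compact/amenable groups give the corresponding property) is routine: when $G$ and $H$ are finite, $BM_{a}(G,H)=L^{1}(G\times H)$ is finite-dimensional semisimple, hence contractive; when compact, a standard averaging argument produces the required splitting of the multiplication map; when amenable, Johnson's construction of a virtual diagonal transfers verbatim.

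The main obstacle is the identification step: one must be sure that $f\mapsto \mu_{f}$ is isometric so that the completion defining $BM_{a}(G,H)$ really produces $L^{1}(G\times H)$ rather than some strictly larger completion, and that the involution and convolution on $BM(G,H)$ defined in \textup{(\ref{r6})} restrict to the usual ones on $L^{1}(G\times H)$. Both points are contained in the cited results of \cite{GB, GI}, and once they are invoked each of (i)--(v) is a one-line transcription of a standard theorem on group algebras.
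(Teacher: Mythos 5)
Your reduction rests on the identification $BM_{a}(G,H)\cong L^{1}(G\times H)$, and that identification is false except in degenerate cases; this is precisely the point you flag as ``the main obstacle,'' and it is not settled in your favour by the cited references. The map $f\mapsto\mu_{f}$ from $L^{1}(G\times H)$ into $BM(G,H)=(C_{0}(G)\hat\otimes C_{0}(H))^{*}$ is injective, contractive and has dense range in $BM_{a}(G,H)$, but it is \emph{not} isometric: since $\|u\|_{\epsilon}\leq\|u\|_{\pi}$ on $C_{0}(G)\otimes C_{0}(H)$ one only gets $\|\mu_{f}\|_{BM}\leq\|f\|_{1}$, and by Grothendieck's inequality the $BM$-norm restricted to $L^{1}(G)\otimes L^{1}(H)$ is equivalent to the \emph{injective} tensor norm, whereas the $L^{1}(G\times H)$-norm is the \emph{projective} tensor norm. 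For infinite $G$ and $H$ these are inequivalent, so the completion $BM_{a}(G,H)$ is strictly larger than $L^{1}(G\times H)$. The theorem of Gilbert--Ito--Schreiber that the paper actually invokes identifies $BM_{a}(G,H)$ with $L^{1}(G)\check\otimes L^{1}(H)$, not with $L^{1}(G)\hat\otimes L^{1}(H)=L^{1}(G\times H)$; indeed, if your identification were correct the algebra $BM_{a}(G,H)$ would be a pointless renaming of $L^{1}(G\times H)$.

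Because of this, your ``one-line transcription'' of the classical homological classification of $L^{1}(K)$ does not apply. What survives are exactly the implications that pass through a continuous homomorphism with dense range: amenability and weak amenability of $L^{1}(G\times H)$ do push forward to $BM_{a}(G,H)$, which yields the ``if'' halves of (iii) and (iv) and all of (v) --- and this is also how the paper obtains (v). But the ``only if'' halves of (iii) and (iv), and both directions of (i) and (ii), must be argued on $L^{1}(G)\check\otimes L^{1}(H)$ itself: contractivity and biprojectivity do not transfer between a densely embedded subalgebra and its completion in a strictly weaker norm, in either direction. The paper's own (admittedly terse) proof works directly with the injective tensor product, using the bounded approximate identity to collapse biflatness to amenability and reducing to the factors $L^{1}(G)$ and $L^{1}(H)$, which carry bounded approximate identities and are reached as closed subalgebras/quotients of the tensor product. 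To repair your argument you would need to replace the identification step by $BM_{a}(G,H)\cong L^{1}(G)\check\otimes L^{1}(H)$ and then supply genuine hereditary arguments for injective tensor products rather than quoting the $L^{1}(K)$ theorems.
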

\begin{proof}
We know that 
$$
BM_{a}(G, H) := L^{1}(G)\check\otimes L^{1}(H)
$$
Since $BM_{a}(G, H)$ has bounded approximate identity, $BM_{a}(G, H)$ is biflat if and only if it is amenable. Also, $L^{1}(G\times H)$ is dense in $ BM_{a}(G, H)$, thus $BM_{a}(G, H)$ is weakly amenable.
Since $ L^{1}(G)$ and $ L^{1}(H)$ are closed subalgebra with bounded approximate identity, so the proof is complete.
\end{proof}




\begin{thebibliography}{9999}

\bibitem{MAR}
M. Aghakoochaki and A. Rejali, $C^{*}$- properties of vector-valued Banach algebras,arXiv:2212.06131v1 [math.FA], (2022).
\bibitem{MAR2}
M. Aghakoochaki and A. Rejali, $L_{1}$- Properties of vector-valued Banach algebras, arXiv:4655353 [math.FA], (2022).
\bibitem{MAR3}
M. Aghakoochaki and A. Rejali,
BSE- property of tensor product Banach algebras,  arXiv:4663139 [math.FA], (2022).
 \bibitem{SB}
S. Bochner, A theorem on Fourier- Stieltjes integrals, Bull.Amer Math. Soc., 40(1934),271-276.
\bibitem{D}
H. G. Dales, Banach Algebras and Automatic Continuity. London Math. Soc. Monogr. 24, Clarendon Press, Oxford,2000.
\bibitem{DG}
 H.G. Dales, F. Ghahramani and A. Ya. Helemskii, The amenability of measure algebras,J. London Math. Soc. (2) 66 (2002) 213-226.
\bibitem{N2}
W. F. Eberlein, Characterizations of Fourier- Stieltjes transforms, Duke Math. J.,22(1955), 465- 468.
\bibitem{GI}
J. E.Gilbert, T. Ito and B. M. Schreiber,  Bimeasure Algebras on Locally Compact Groups, J. Functional Analysis 64, 134-162 (1985)
\bibitem{GB}
C. C. Graham and B. M. Schreiber, Bimeasure algebas on  LCA groups,  Pac. J. Math. Vol 115, No. 1,(1984), 91- 127.  

\bibitem{he}
A.Ya. Helemskii, The homology of Banach and topological algebras, Translated from the Russian
by Alan West. Mathematics and its Applications (Soviet Series), 41. Kluwer Academic Publishers
Group, Dordrecht, 1989.
\bibitem{HR}
E. Hewitt and K. A. Ross, Abstract harmonic analysis – Vol. 1, 2nd edn (Springer, Berlin, 1979).
\bibitem{Kn}
E. Kaniuth, A course in commutative Banach algebras,
\bibitem{klar}
R. Larsen, An Introduction to the Theory of Multipliers (Springer, New York, 1971).

\bibitem{mer}
M. J. Mehdipour and A. Rejali, Homological and cohomological properties of Banach algebras and their second duals, 	arXiv:2210.16596 [math.FA] (2022).
\bibitem{wr}
W. Rudin, Real and complex analysis, McGraw-Hill Education, 1987.
\bibitem{rn}
V. Runde, Lectures on amenability, Lecture Notes in Mathematics 1774, Springer Verlag, Berlin,
2002.
\bibitem{P1}
I.J. Schoenberg, A remark on the  preceding note by Bochner, Bull.Amer Math. Soc., 40(1934),277-278.


\bibitem{E6}
S. E.Takahasi and O.Hatori, Commutative Banach algebras which satisfy a Bochner- Schonberg- Eberlein- type theorem, Proc.Amer. Math. Soc., 110(1990), 149- 158.
\bibitem{E7}
S.E.Takahasi and O.Hatori, Commutative Banach algebras and BSE- inequalities, Math. Japon, 37(1992), 47- 52.
\bibitem{E8}
S.E.Takahasi and O.Hatori and K. Tanahashi, Commutative Banach algebras and BSE- norm, Math. Japon,46(1997), 59- 80.
\bibitem{NV}
N. T. H. Varopoulos, Tensor algebras and harmonic analysis, Acta Math., 119 (1968),
51-112.


 

%
 





\end{thebibliography}
\end{document}